\newtheorem{thm}{Theorem}[section]
\newtheorem{lem}[thm]{Lemma}
\newtheorem{cor}[thm]{Corollary}
\theoremstyle{definition}
\newtheorem{dfn}[thm]{Definition}
\newtheorem{rem}[thm]{Remark}
\newcommand{\N}{\mathbb{N}}
\newcommand{\Q}{\mathbb{Q}}
\newcommand{\Z}{\mathbb{Z}}
\newcommand{\C}{\mathbb{C}}
\newcommand{\D}{\mathbb{D}}
\newcommand{\oo}{\mathcal{O}}
\newcommand{\M}{\mathrm{M}}
\newcommand{\wtil}[1]{\widetilde{#1}}
\newcommand{\what}[1]{\widehat{#1}}
\newcommand{\geom}{\mathrm{geom}}
\newcommand{\arith}{\mathrm{arith}}
\newcommand{\alg}{\mathrm{alg}}
\newcommand{\ur}{\mathrm{ur}}
\newcommand{\Rep}{\mathrm{Rep}}
\newcommand{\pf}{\mathrm{pf}}
\newcommand{\Sen}{\mathrm{Sen}}
\begin{document}

\title{On Lie algebras arising from $p$-adic representations\\ in the imperfect residue field case}
\author{Shun Ohkubo
\footnote{
Graduate School of Mathematical Sciences, The University of Tokyo, 3-8-1 Komaba Meguro-ku Tokyo 153-8914, Japan. E-mail address: shuno@ms.u-tokyo.ac.jp }
}
\date{\today}

\maketitle

\begin{abstract}
Let $K$ be a complete discrete valuation field of mixed characteristic $(0,p)$ with residue field $k_K$ such that $[k_K:k_K^p]=p^d<\infty$. Let $G_K$ be the absolute Galois group of $K$ and $\rho:G_K\to GL_h(\Q_p)$ a $p$-adic representation. When $k_K$ is perfect, Shankar Sen described the Lie algebra of $\rho(G_K)$ in terms of so-called Sen's operator $\Theta$ for $\rho$. When $k_K$ may not be perfect, Olivier Brinon defined $d+1$ operators $\Theta_0,\dots,\Theta_d$ for $\rho$, which coincides with Sen's operator $\Theta$ in the case of $d=0$. In this paper, we describe the Lie algebra of $\rho(G_K)$ in terms of Brinon's operators $\Theta_0,\dots,\Theta_d$, which is a generalization of Sen's result.
\end{abstract}

\section*{Introduction}
In the series of papers (\cite{AB}, \cite{Bri}, \cite{MSMF}), Fabrizio Andreatta and Olivier Brinon generalized some parts of Fontaine's theory of $p$-adic representations to the relative situation. In \cite[\S~3]{AB} and \cite{Bri}, they extended Sen's theory (\cite[\S\S~1, 2]{Sen2}). Particularly, they defined linear operators on a certain representation, which are analogue of Sen's operator $\Theta$. It is natural to ask whether Sen's theorem on a characterization of the images of $p$-adic representations with respect to $\Theta$ (\cite[Theorem~11]{Sen2}) holds in the relative situation. In this paper, we prove a generalization of this Sen's theorem under the setting in \cite{Bri}, i.e., the base ring is a complete discrete valuation field whose residue field may not be perfect (Theorem~\ref{thm:main}). Sen's original result essentially follows from a certain observation on ramification of $p$-adic Lie extensions. We prove the same ramification result in the imperfect residue field case just by using Borger's generic residual perfection, which preserves ramification (\S~\ref{sec:ram}). For an \'etale fundamental group $\mathcal{G}$ arising in the relative situation, one can expect to deduce some information about the images of $p$-adic representations of $\mathcal{G}$ from our result by restricting $\mathcal{G}$ to its decomposition groups.

\subsection*{Acknowledgement}
The author is supported by Research Fellowships of Japan Society for the Promotion of Science for Young Scientists.

\subsection*{Notation}
Let $p$ be a prime. Let $(K,v_p)$ be a complete valuation field of mixed characteristic $(0,p)$ with $v_p(p)=1$. Denote by $\oo_K$ and $k_K$ the integer ring and the residue field of $K$. In the rest of this paper, we assume that $v_p$ is a discrete valuation. Denote by $\pi_K$ a uniformizer of $K$. Fix an algebraic closure of $K$ and we denote it by $K^{\alg}$ or $\overline{K}$. We denote by $G_K$ (resp. $I_K$) the absolute Galois group of $K$ (resp. the inertia subgroup of $G_K$). For an algebraic extension $L/K$, we endow $L$ with the $p$-adic topology and we denote by $\hat{L}$ the completion of $L$. Particularly, we denote the completion of a maximal unramified extension of $K$ by $K^{\ur}$ and denote $\what{\overline{K}}$ by $\C_p$.

For matrices $X,Y\in \M_h(\C_p)$ and $n\in\Z$, we mean $X\equiv Y\mod{p^n\M_h(\oo_{\C_p})}$ by $X\equiv Y\mod{p^n}$ for simplicity. We define $\exp:2p^n\M_h(\oo_{\C_p})\to 1+2p^nM_h(\oo_{\C_p})$ and $\log:1+2p^n\M_h(\oo_{\C_p})\to 2p^n\M_h(\oo_{\C_p})$ for $n\in\N_{\ge 1}$ as usual and we extend $\log:1+2p^n\Z_p\to 2p^n\Z_p$ to $\log:\Z_p^{\times}\to \Z_p$ by $\log(x):=\log(x^{2(p-1)})/2(p-1)$.

\section{Some ramification theory}\label{sec:ram}
In this section, we generalize a key lemma in \cite{Sen1}, which concerns a ramification of a $p$-adic Lie extension. Throughout this section, we assume that $k_K$ is separably closed.

\begin{lem}[{\cite[Lemma~1]{Sen1}}]\label{lem:Ax}
Let $L/K$ be a Galois extension. Let $x\in\what{L}$ be an element such that for some $n\in\Z$, $(g-1)(x)\in p^n\oo_{\what{L}}$ for all $g\in G_{L/K}$. Then, there exists $y\in K$ such that $x-y\in p^{n-2}\oo_{\what{L}}$.
\end{lem}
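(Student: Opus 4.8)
The plan is to reduce the statement to a finite Galois subextension, where the trace map is available, and then to invoke the Ax--Sen ramification estimate.

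\emph{Reduction to the finite case.} Since $L$ is dense in $\what{L}$, I would first choose $x'\in L$ with $v_p(x-x')\ge n$. Every $g\in G_{L/K}$ preserves $\oo_{\what{L}}$ and the valuation $v_p$, so $v_p\big(g(x-x')-(x-x')\big)\ge n$; combined with the hypothesis $v_p\big((g-1)(x)\big)\ge n$ this gives $v_p\big((g-1)(x')\big)\ge n$ for all $g\in G_{L/K}$. Now pick a finite Galois subextension $M/K$ of $L/K$ with $x'\in M$. As $G_{L/K}\to \mathrm{Gal}(M/K)$ is surjective, we get $v_p\big((\sigma-1)(x')\big)\ge n$ for every $\sigma\in \mathrm{Gal}(M/K)$. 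If we can produce $y\in K$ with $v_p(x'-y)\ge n-2$, then $v_p(x-y)\ge\min(n,n-2)=n-2$, i.e.\ $x-y\in p^{n-2}\oo_{\what{L}}$, which is the assertion. Thus it suffices to treat the case $x'\in M$ with $M/K$ finite Galois.

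\emph{The finite case.} Here the natural device is the trace. If one chooses $\theta\in M$ with $\mathrm{Tr}_{M/K}(\theta)=1$ and sets $y=\mathrm{Tr}_{M/K}(\theta x')\in K$, then, using $x'=x'\,\mathrm{Tr}_{M/K}(\theta)=\sum_{\sigma}\sigma(\theta)\,x'$, a direct computation gives
\[
x'-y=\sum_{\sigma\in \mathrm{Gal}(M/K)}\sigma(\theta)\,(1-\sigma)(x'),
\]
whence $v_p(x'-y)\ge n+v_p(\theta)$, since $v_p(\sigma\theta)=v_p(\theta)$ and $v_p\big((1-\sigma)(x')\big)\ge n$. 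One is thereby reduced to finding $\theta$ of controlled valuation with trace $1$, that is, to a lower bound on $\mathrm{Tr}_{M/K}(\oo_M)$, which is governed by the different $\mathfrak{d}_{M/K}$. A single such estimate is insufficient because the different is unbounded over the tower $L/K$; instead one normalizes along the ramification filtration of $\mathrm{Gal}(M/K)$. This is exactly Ax's theorem: the defects of the normalized traces incurred at the successive wild steps are scaled by the accumulated degree and hence decay geometrically in $p$, so that the total loss is bounded by the universal constant $\tfrac{p}{(p-1)^2}=\sum_{i\ge 1}i\,p^{-i}\le 2$. This yields $y\in K$ with $v_p(x'-y)\ge n-2$, uniformly in $M$.

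\emph{Main obstacle.} The delicate point is precisely this uniform constant: the per-level losses must be controlled so that they sum to at most $2$ rather than accumulating linearly with the height of the tower. The additional subtlety in the present setting is that $k_K$, although separably closed, may be imperfect, so a finite $M/K$ can have an inseparable residue extension (ferocious ramification), and the naive different/trace bounds require care. I would argue that Ax's estimate is purely valuation-theoretic, depending only on the valuations of the differences of conjugates and of the different, both of which are available here; alternatively one reduces to the perfect residue field case by a base change preserving the relevant valuations. I expect this uniformity across arbitrarily wildly (and ferociously) ramified finite extensions to be the technical heart of the argument, the genuinely new ramification input via generic residual perfection being reserved for the main theorem of the section.
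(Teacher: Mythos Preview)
Your proposal is correct and follows essentially the same approach as the paper: reduce to $x\in L$ by density, then invoke Ax's estimate \cite[Proposition~1]{Ax}, which produces $y\in K$ with $v_p(x-y)\ge n-p/(p-1)^2\ge n-2$. The paper simply cites Ax as a black box without unpacking the trace argument or the ferocious-ramification issue, so your sketch is more detailed but not materially different; in particular you are right that the generic residual perfection base change is reserved for Lemma~\ref{lem:key} and not needed here.
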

\begin{proof}
Since $L$ is dense in $\what{L}$, we may assume $x\in L$. By \cite[Proposition~1]{Ax}, there exists $y\in K$ such that $v_p(x-y)\ge n-(p/(p-1)^2)\ge n-2$, which implies the assertion.
\end{proof}

\begin{dfn}[{cf. \cite[p. 162]{Sen1}}]\label{dfn:Lie}
Let $L/K$ be a Galois extension such that $G=G_{L/K}$ is a $p$-adic Lie group. A Lie filtration of $G$ is a decreasing filtration $\{G_n\}_{n\in\N}$ by open normal subgroups such that for some $n_0\in\N$, $G_{n_0}$ is a $p$-saturated subgroup of $G$ (\cite[Definition~2.1.6]{Laz}) and $G_{n+n_0}=G_{n_0}^{p^n}$ for all $n\in\N$.
\end{dfn}

In the rest of this section, let notation be as in Definition~\ref{dfn:Lie}. We put $K_n:=L^{G_n}$. The following is a key ingredient in the proof of our main theorem.
\begin{lem}[{cf. \cite[Lemma~3]{Sen1}}]\label{lem:key}
Let $\lambda:G_n\to\Q_p$ be a continuous map and $x\in\what{L}$ such that for some $m\in\Z$, we have
\[
\lambda(g)\equiv (g-1)(x)\mod{p^m}\text{ for all }g\in G_n.
\]
Then, there exists a constant $c'_G\in\N$ independent of $n$ such that
\[
\lambda(g)\equiv 0\mod{p^{m-c'_G}}\text{ for all }g\in G_{n}.
\]
\end{lem}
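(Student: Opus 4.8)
The plan is to reduce the estimate to a single uniform ramification bound for the tower $\{K_n\}$, packaged as a normalized trace, and then to establish that bound in the imperfect residue field case by reduction to the perfect case via Borger's residual perfection.

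The conceptual core is the following observation. Suppose one has, for each $n$, a $G_n$-equivariant $\what{K_n}$-linear projection (a normalized trace) $R_n\colon\what L\to\what{K_n}$ whose operator norm is bounded by $p^{c}$ for a constant $c$ independent of $n$. Then the lemma is immediate: applying $R_n$ to $\lambda(g)\cdot 1\equiv(g-1)(x)\pmod{p^m}$ and using $G_n$-equivariance gives $R_n\bigl((g-1)(x)\bigr)=(g-1)\bigl(R_n(x)\bigr)=0$, since $R_n(x)\in\what{K_n}=\what L^{G_n}$ is fixed by $g$; on the other hand $R_n$ fixes $\lambda(g)\in\Q_p\subseteq\what{K_n}$ and sends $p^m\oo_{\what L}$ into $p^{m-c}\oo_{\what L}$. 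Hence $\lambda(g)\equiv0\pmod{p^{m-c}}$ simultaneously for all $g\in G_n$, and one may take $c'_G=c$. Note that this step needs no hypothesis on $\lambda$ beyond continuity and the displayed congruence; in particular any approximate additivity of $\lambda$ plays no role here.

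It therefore remains to construct $R_n$ with uniformly bounded norm, which is exactly the ramification content of the lemma. I would build $R_n$ from the partial normalized traces $[K_{n+1}:K_n]^{-1}\mathrm{Tr}_{K_{n+1}/K_n}$ along the tower, the convergence and the norm estimate being governed by a uniform upper bound on the ramification of the steps $K_{n+1}/K_n$; Lemma~\ref{lem:Ax} supplies the underlying identification $\what L^{G_n}=\what{K_n}$. Here the Lie filtration is used through its $p$-saturated structure: for $n\ge n_0$ one has $G_{n+1}=G_n^{p}$, so the tower is regular from level $n_0$ on and one constant serves all large $n$, the finitely many initial levels being absorbed into $c'_G$.

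The main obstacle, and the only place the imperfect residue field intervenes, is the uniform ramification bound itself. The plan is to reduce it to the classical perfect-residue-field estimate of Sen and Tate (\cite[Lemma~3]{Sen1}) by passing through Borger's generic residual perfection $K\hookrightarrow K'$: this is an extension with perfect residue field that preserves the discrete valuation and, crucially, the ramification, so that after transporting the tower the relevant constant is the classical one and in particular independent of $n$. I expect the delicate points to be checking that the perfection is compatible with the Lie filtration, so that each $K_n'/K'$ carries the same ramification as $K_n/K$, and that ``preserves ramification'' is strong enough to make the bound uniform across the whole tower at once rather than merely level by level.
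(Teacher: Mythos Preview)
Your argument is correct, but it takes a longer route than the paper's. The paper reduces the \emph{entire statement} to Sen's Lemma~3 as a black box: Borger's generic residual perfection $K^{\mathrm{g}}$ has perfect residue field, and since $K$ is algebraically closed in $K^{\mathrm{g}}$ (and hence in $K^{\mathrm{g},\ur}$ by Krasner), the base change $K\to K^{\mathrm{g},\ur}$ leaves both $G$ and every $G_n$ unchanged; one then simply views $x$ in the larger completed field and quotes \cite[Lemma~3]{Sen1} directly. By contrast, you open up Sen's proof, reconstruct the normalized-trace machinery $R_n$, and only invoke Borger at the very end to import the uniform ramification bound needed for the norm of $R_n$. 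Both routes work, but the paper's avoids rebuilding the Tate--Sen trace apparatus and, more to the point, sidesteps the issue you flag as ``delicate'': rather than arguing that residual perfection \emph{preserves ramification} level by level, the paper uses only that it \emph{preserves the Galois groups} (via algebraic closedness), which is enough to transport the whole lemma at once. Your normalized-trace reduction in the first paragraph is a clean self-contained argument and has the merit of making the mechanism explicit; the paper's version trades that transparency for brevity.
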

\begin{proof}
Let $K^{\mathrm{g}}$ be the generic residual perfection of $K$ (\cite[1.12]{Bor}). Recall that $K^{\mathrm{g}}$ is a complete discrete valuation field of mixed characteristic $(0,p)$ with perfect residue field and there exists a canonical morphism $K\to K^{\mathrm{g}}$. Moreover, $K$ is algebraically closed in $K^{\mathrm{g}}$ (\cite[Lemma~2.6]{Bor}). Hence, $K$ is algebraically closed in $K^{\mathrm{g},\ur}$, which is the completion of a maximal unramified extension, by Krasner's lemma. Since Galois groups $G$ and $G_n$ are invariant after the base change $K\to K^{\mathrm{g},\ur}$, we may reduce to the perfect residue field case \cite[Lemma~3]{Sen1}.
\end{proof}

\begin{rem}
In the relative situation, the author does not know that an analogue of Lemma~\ref{lem:key} can be reduced to the complete discrete valuation field case since Zariski-Nagata purity theorem may not hold over a non-regular base ring.
\end{rem}

\section{Brinon's generalization of Sen's theory}\label{sec:bri}
In this section, we recall some basic results of Brinon's generalization of Sen's theory (\cite{Bri}). In this section, we assume $[k_K:k_K^p]=p^d<\infty$.

First, we fix some notation. We fix a system of primitive $p^n$-th root $\zeta_{p^n}$ of unity such that $\zeta_{p^{n+1}}^p=\zeta_{p^n}$ for all $n\in\N$. Let $\chi:G_K\to\Z_p^{\times}$ be the cyclotomic character satisfying $g(\zeta_{p^n})=\zeta_{p^n}^{\chi(g)}$. We also fix a lift $t_1,\dots,t_d$ of a $p$-basis of $k_K$ and a system of their $p^n$-th roots $t_1^{p^{-n}},\dots,t_d^{p^{-n}}$ such that $(t_j^{p^{-n-1}})^p=t_j^{p^{-n}}$ for all $n\in\N$ and $1\le j\le d$. We put
\[
K_n:=K(\zeta_{p^n},t_1^{p^{-n}},\dots,t_d^{p^{-n}}),\ K_{\infty}:=\cup_{n\in\N}K_n,
\]
\[
K^{\geom}_n:=K(t_1^{p^{-n}},\dots,t_d^{p^{-n}}),\ K^{\geom}:=\cup_{n\in\N}K^{\geom}_n,
\]
\[
K^{\arith}_n:=K(\zeta_{p^n}),\ K^{\arith}:=\cup_{n\in\N}K^{\arith}_n,
\]
\[
\Gamma^{\geom}_K:=G_{K_{\infty}/K^{\arith}}\hookrightarrow\Gamma_K:=G_{K_{\infty}/K}\twoheadrightarrow\Gamma^{\arith}_K:=G_{K^{\arith}/K},
\]
\[
H_K:=G_{K^{\alg}/K_{\infty}}.
\]
Let $\oo_{\wtil{K}}$ be a Cohen ring of $k_K$ together with an embedding $\oo_{\wtil{K}}\hookrightarrow\oo_K$. We can choose $\oo_{\wtil{K}}$ such that $t_1,\dots,t_d\in\oo_{\wtil{K}}$. For such an $\oo_{\wtil{K}}$, let $\wtil{K}$ be the fraction field of $\oo_{\wtil{K}}$ and we define $\wtil{K}_n$ etc. similarly as above. Then, there exists $\gamma_1,\dots,\gamma_d\in\Gamma_{\wtil{K}}^{\geom}$ such that
\[
\gamma_j(t_i^{p^{-n}})=\zeta_{p^n}^{\delta_{ij}}t_i^{p^{-n}}.
\]
We can choose a section $\Gamma_{\wtil{K}}^{\arith}\to\Gamma_{\wtil{K}}$ of a canonical projection $\Gamma_{\wtil{K}}\to\Gamma_{\wtil{K}}^{\arith}$. By using this section, we may identify $\Gamma_{\wtil{K}}^{\arith}$ as $G_{\wtil{K}_{\infty}/\wtil{K}^{\geom}}$. Then, we have an isomorphism $\iota:\Gamma_{\wtil{K}}\to\Z_p^{\times}\ltimes\Z_p^d$ under which $\gamma_0\in\Gamma_{\wtil{K}}^{\arith}$ (resp. $\gamma_j$) corresponds to $\chi(\gamma_0)$ (resp. $(1,\bm{e}_j)$), where $\bm{e}_j\in\Z_p^d$ is the $j$-th elementary vector. We write $\iota=(\iota_0,\dots,\iota_d)$ and define $\eta_0(g):=\log(\chi(g))$, $\eta_j(g):=\iota_j(g)$ for $1\le j\le d$. Then, $p^{-n}\eta:\Gamma_{\wtil{K}_n}\to\Z_p^{d+1}$ is a chart of the $p$-adic Lie group $\Gamma_{\wtil{K}_n}$ for $n\in\N_{\ge 2}$. We put
\[
\Gamma_{K}^{(j)}:=\{g\in\Gamma_K;\eta_0(g)=\stackrel{\stackrel{j}{\check{}}}{\dots}=\eta_d(g)=0\}
\]
for $0\le j\le d$. Since $\Gamma_K$ is an open subgroup of $\Gamma_{\wtil{K}}$, $\eta_j:\Gamma_K^{(j)}\to\Z_p$ is locally isomorphic.

A $\C_p$-representation of $G_K$ is a finite dimensional $\C_p$-vector space $V$ with continuous semi-linear $G_K$-action. Denote by $\Rep_{\C_p}G_K$ the category of $\C_p$-representations of $G_K$. Similarly, we define categories $\Rep_{\what{K}_{\infty}}\Gamma_K$, $\Rep_{K_{\infty}}\Gamma_K$ and $\Rep_{\Q_p}G_K$, where $\what{K}_{\infty}$ is the $p$-adic completion of $K_{\infty}$. Then, the functors
\[
\Rep_{\what{K}_{\infty}}\Gamma_K\to\Rep_{\C_p}G_K;V\mapsto V\otimes_{\Q_p}\C_p,
\]
\[
\Rep_{K_{\infty}}\Gamma_K\to\Rep_{\what{K}_{\infty}};V\mapsto V\otimes_{\Q_p}\what{K}_{\infty}
\]
are equivalences of categories (\cite[Theorem~4]{Bri}). Thus, we obtain an equivalence of categories $\D_{\Sen}:\Rep_{\C_p}G_K\to\Rep_{K_{\infty}}\Gamma_K$. Note that $\D_{\Sen}(V)$ is defined over $K_n$ for all sufficiently large $n\in\N$, i.e., there exists a finite dimensional $K_n$-subspace $\D_{\Sen,n}(V)$ of $\D_{\Sen}(V)$ stable by $\Gamma_{K_n}$ such that a canonical map $K_{\infty}\otimes_{K_n}\D_{\Sen,n}(V)\to\D_{\Sen}(V)$ is an isomorphism. For $\gamma\in\Gamma_K$ and $x\in\D_{\Sen}(V)$, the series $\log(\gamma)(x):=\sum_{n\ge 1}(-1)^{n-1}(\gamma-1)^nx/n$ converges and it defines a $K_{\infty}$-endomorphism $\log(\gamma)$ on $\D_{\Sen}(V)$. For $0\le j\le d$, we put
\[
\Theta_j:=\frac{\log(\gamma^{(j)})}{\eta_j(\gamma^{(j)})},
\]
where $\gamma^{(j)}\in\Gamma_K^{(j)}\setminus\{1\}$ is sufficiently close to $1$. Note that $\Theta_0,\dots,\Theta_d$ are independent of the choices of the $\gamma^{(j)}$'s. Recall that we have the relations $[\Theta_0,\Theta_j]=\Theta_j$ for $1\le j\le d$ and $[\Theta_i,\Theta_j]=0$ for $1\le i,j\le d$. Also, note that we can recover the action of $\Gamma_K$ on $\D_{\Sen}(V)$ by the formula
\[
\gamma^{(j)}=\exp(\eta_j(\gamma^{(j)})\Theta_j)\text{ for }0\le j\le d,
\]
where $\gamma^{(j)}\in\Gamma_K^{(j)}\setminus\{1\}$ is sufficiently close to $1$. Note that $\D_{\Sen,n}(V)$ is stable by $\Theta_j$ for all sufficiently large $n$.

Finally, we note that Brinon's operators $\Theta_0,\dots,\Theta_d$ are compatible with a certain base change as follows. For $1\le j\le d$, we define $K^{(j)}$ as the completion of $\cup_{n\in\N}K(t_1^{p^{-n}},\dots,t_{j-1}^{p^{-n}},t_{j+1}^{p^{-n}},\dots,t_d^{p^{-n}})$. Then, $K^{(j)}$ is a complete discrete valuation field. In the following, we regard $G_{K^{(j)}}$ as a closed subgroup of $G_K$. We choose $t_j$ as a lift of a $p$-basis of the residue field $k_{K^{(j)}}$. For $V\in\Rep_{\C_p}G_K$, there exists a canonical isomorphism $K^{(j)}_{\infty}\otimes_{K_{\infty}}\D_{\Sen}(V)\to\D_{\Sen}(V|_{K^{(j)}})$ and $\Theta_0,\Theta_j$ are the associated Brinon's operators to $V|_{K^{(j)}}$.

Similarly, we define $K^{\pf}$ as the completion of $K^{\geom}$. Then, $K^{\pf}$ is a complete discrete valuation field with perfect residue field. There exists a canonical isomorphism $K^{\pf}_{\infty}\otimes_{K_{\infty}}\D_{\Sen}(V)\to \D_{\Sen}(V|_{K^{\pf}})$ and $\Theta_0$ is the associated Sen's operator to $V|_{G_{K^{\pf}}}$.


\section{Main Theorem}
Let notation be as in \S~\ref{sec:bri}. Let $\rho:G_K\to GL_h(\Q_p)$ be a $p$-adic representation of $G_K$. We identify $V\otimes_{\Q_p}\C_p$ as $\D_{\Sen}(V)\otimes_{K_{\infty}}\C_p$. Let $\Theta_0,\dots,\Theta_d$ be Brinon's operators for the $\C_p$-representation $V\otimes_{\Q_p}\C_p$ of $G_K$. We denote the scalar extension of $\Theta_0,\dots,\dots,\Theta_d$ with respect to $K_{\infty}\to\C_p$ by $\Theta_0,\dots,\Theta_d$ again. Let $\mathfrak{g}$ be the Lie algebra of $\rho(I_K)$. The following is our main theorem in this paper.
\begin{thm}[{cf. \cite[Theorem~11]{Sen2}}]\label{thm:main}
The Lie algebra $\mathfrak{g}$ is the smallest $\Q_p$-subspace $S$ of $\mathrm{End}_{\Q_p}V$ such that $S\otimes_{\Q_p}\C_p$ contains $\Theta_0,\dots,\Theta_d$.
\end{thm}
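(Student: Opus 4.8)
The plan is to transpose Sen's argument \cite{Sen2} to the present setting, using Lemma~\ref{lem:key} in place of his ramification estimate. First I would reduce to the situation of \S\ref{sec:ram}: base-changing along $K\to K^{\ur}$ makes the residue field separably closed without changing its $p$-rank $d$, identifies $I_K$ with the full group $G_{K^{\ur}}$, and leaves $\D_{\Sen}(V)$ and the operators $\Theta_0,\dots,\Theta_d$ unchanged, so that afterwards $\mathfrak{g}$ is the Lie algebra of the \emph{whole} image $\rho(G_K)$. Write $S_{\min}$ for the smallest $\Q_p$-subspace of $\mathrm{End}_{\Q_p}V$ with $S_{\min}\otimes_{\Q_p}\C_p\ni\Theta_0,\dots,\Theta_d$. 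Since $\C_p$ is faithfully flat over $\Q_p$, the two inclusions $S_{\min}\subseteq\mathfrak{g}$ and $\mathfrak{g}\subseteq S_{\min}$ may each be tested after $\otimes_{\Q_p}\C_p$, and together they give the theorem.

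The technical core is a comparison between the semilinear $\Gamma_K$-action on $\D_{\Sen}(V)$ and the linear action $\rho$. Fix $n$ large enough that $\D_{\Sen,n}(V)$ is defined over $K_n$ and stable under every $\Theta_j$, pick a $\Q_p$-basis of $V$ and a $K_n$-basis of $\D_{\Sen,n}(V)$, and let $P\in GL_h(\what{K}_{\infty})$ be the change-of-basis matrix. Unwinding the semilinearity shows that a lift $g\in G_{K_n}$ of $\gamma\in\Gamma_{K_n}$ acts on the chosen basis by $P^{-1}\rho(g)\,g(P)$; taking $\gamma=\gamma^{(j)}\in\Gamma_{K_n}^{(j)}$ close to $1$, applying $\log$, and conjugating back to the basis of $V$ yields
\[
\eta_j(\gamma^{(j)})\,\Theta_j\equiv\log\rho(g)+(g-1)(P)\,P^{-1}\pmod{\text{higher-order terms in }(g-1)},
\]
the error being a Baker--Campbell--Hausdorff bracket of two small matrices, hence negligible modulo a power of $p$ that grows as $g\to1$.

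For $S_{\min}\subseteq\mathfrak{g}$ I must check $\Theta_j\in\mathfrak{g}\otimes\C_p$. For $\Theta_0$ this is Sen's theorem \cite{Sen2} applied to $V|_{G_{K^{\pf}}}$: by the compatibility recalled at the end of \S\ref{sec:bri}, $\Theta_0$ is the Sen operator of that restriction, whose image has Lie algebra contained in $\mathfrak{g}$. For $j\ge1$ I would induct on $d$: the pair $(\Theta_0,\Theta_j)$ consists of Brinon's operators for $V|_{G_{K^{(j)}}}$, a field of $p$-rank $1$, so the case $d=1$ gives $\Theta_j\in\bigl(\mathrm{Lie}\,\rho(I_{K^{(j)}})\bigr)\otimes\C_p\subseteq\mathfrak{g}\otimes\C_p$.

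The substantive inclusion is $\mathfrak{g}\subseteq S_{\min}$, and here Lemma~\ref{lem:key} is decisive. Fix $S$ with $S\otimes\C_p\ni\Theta_0,\dots,\Theta_d$; since the $\log\rho(g)$ with $g\in G_{K_n}$ span $\mathfrak{g}$, it suffices to prove $f(\log\rho(g))=0$ for every $\Q_p$-linear $f\colon\mathrm{End}_{\Q_p}V\to\Q_p$ vanishing on $S$. Extend $f$ to $f_{\C_p}$ and pair it with the comparison relation: the term $\eta_j(\gamma^{(j)})f_{\C_p}(\Theta_j)$ vanishes, leaving $f(\log\rho(g))\equiv-f_{\C_p}\bigl((g-1)(P)P^{-1}\bigr)$ modulo $p^{m}$ with $m\to\infty$ as $g\to1$. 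Up to higher order the right-hand side is the value at $g$ of the continuous $1$-cocycle $b(g):=f_{\C_p}\bigl(\log(g(P)P^{-1})\bigr)$ on $\Gamma_{K_n}$ valued in $\what{K}_{\infty}$; by Brinon's generalization of Tate's computation \cite{Bri}, $H^1_{\cont}(\Gamma_{K_n},\what{K}_{\infty})$ is spanned by classes detected by $\Theta_0,\dots,\Theta_d$, and the class of $b$ is their combination with coefficients $f_{\C_p}(\Theta_j)=0$, so $b(g)\equiv(g-1)(x)\bmod p^m$ for some $x\in\what{K}_{\infty}$. Thus $\lambda\colon g\mapsto f(\log\rho(g))$ satisfies the hypothesis of Lemma~\ref{lem:key} on the Lie filtration $\{\Gamma_{K_n}\}$ of Definition~\ref{dfn:Lie}, whence $\lambda\equiv0\bmod p^{m-c'_G}$; letting $m\to\infty$ forces $\lambda\equiv0$, giving $\mathfrak{g}\subseteq S$ and so $\mathfrak{g}=S_{\min}$. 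The hard part will be exactly this step: pinning down $x$ so that $-f_{\C_p}((g-1)(P)P^{-1})$ is an honest approximate coboundary of the shape demanded by Lemma~\ref{lem:key}, \emph{uniformly} across the $d+1$ directions, while keeping the Baker--Campbell--Hausdorff and cocycle errors below $p^m$ as $m\to\infty$. The imperfect residue field intervenes only through this ramification estimate, which Lemma~\ref{lem:key}---proved via Borger's generic residual perfection---supplies.
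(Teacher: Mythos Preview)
Your overall strategy matches the paper's: reduce to separably closed residue field, test both inclusions via linear forms $f$, and use Lemma~\ref{lem:key} as the ramification input. Two points deserve comment.

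First, your argument for $S_{\min}\subseteq\mathfrak{g}$ has a genuine gap. You reduce $\Theta_j\in\mathfrak{g}\otimes\C_p$ (for $j\ge 1$) to the case $d=1$ via the base change $K\to K^{(j)}$, but you never prove the $d=1$ case itself: Sen's theorem only covers $d=0$, so your ``induction on $d$'' has no base. The paper closes this gap by a direct contradiction argument in the $d=1$ case, run on the \emph{same} approximate-coboundary congruence used in the other direction. Once one knows $f(\mathfrak{g})=0$ and (from Sen) $f(A_0)=0$, that congruence reads $(g-1)f(C_m)\equiv -\eta_1(g)f(A_{1,m})\pmod{p^{2m-2r+2}}$; if $f(A_1)\neq 0$ one divides by $f(A_{1,m})$ and applies Lemma~\ref{lem:key} to $\lambda=\eta_1$, forcing $\eta_1\equiv 0$ on an open subgroup, which is absurd. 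So Lemma~\ref{lem:key} is needed in \emph{both} directions, not only for $\mathfrak{g}\subseteq S_{\min}$.

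Second, for $\mathfrak{g}\subseteq S_{\min}$ the paper does not invoke $H^1_{\cont}(\Gamma_{K_n},\what{K}_\infty)$ to produce the trivializing element $x$. Instead it uses Ax's approximation (Lemma~\ref{lem:Ax}): from $g(M)\equiv M\pmod{p^m}$ for $g\in G_m$ one extracts $M_m\in\M_h(\C_p^{G_m})$ with $M_m\equiv M\pmod{p^{m-2}}$, and $C_m:=M_m^{-1}M$ is then the explicit element satisfying $(g-1)f(C_m)\equiv f(\log U_g)-\sum_j\eta_j(g)f(A_j)\pmod{p^{2m-2r+2}}$. This is both simpler and sharper than your cohomological route: your $b$ is only an \emph{approximate} cocycle (because of BCH errors), so an exact $H^1$ computation does not literally apply, and even after fixing that you must still control the denominator of the trivializer $x$ uniformly in $m$ to feed into Lemma~\ref{lem:key}. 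Ax's lemma delivers this integral control for free (a fixed loss of $p^2$), whereas extracting it from Brinon's $H^1$ statement would essentially amount to reproving Ax.
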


As a special case (precisely, the case of $\Theta_0=\dots=\Theta_d=0$) of Theorem~\ref{thm:main}, we can reprove the following theorem (\cite[Theorem~2.1]{Ohk}).
\begin{cor}[{cf. \cite[Corollary in (3.2)]{Sen2}}]
If $V$ is $\C_p$-admissible, then $I_K$ acts on $V$ via a finite quotient.
\end{cor}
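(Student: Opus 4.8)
The plan is to derive the corollary from Theorem~\ref{thm:main} by verifying that $\C_p$-admissibility forces every one of Brinon's operators to vanish, so that the theorem gives $\mathfrak{g}=0$, and then to convert $\mathfrak{g}=0$ into finiteness of $\rho(I_K)$.

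First I would unwind the hypothesis. By definition, $V$ is $\C_p$-admissible exactly when $V\otimes_{\Q_p}\C_p$ is isomorphic, as a $\C_p$-representation of $G_K$, to the trivial representation $\C_p^h$; equivalently, it admits a basis of $G_K$-invariant vectors. Since $\D_{\Sen}$ is an additive equivalence carrying the unit object $\C_p$ to $K_{\infty}$, this yields $\D_{\Sen}(V\otimes_{\Q_p}\C_p)\cong K_{\infty}^h$ equipped with a $\Gamma_K$-invariant $K_{\infty}$-basis $e_1,\dots,e_h$. Such a module descends, for every $n$, to the $K_n$-model $\bigoplus_i K_n e_i$, on which $\Gamma_{K_n}$ acts trivially (it fixes both the scalars $K_n$ and the basis). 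Consequently, for the elements $\gamma^{(j)}\in\Gamma_K^{(j)}$ close to $1$ used to define the operators, $\gamma^{(j)}$ acts as the identity on the descended model $\D_{\Sen,n}(V\otimes_{\Q_p}\C_p)$ once $n$ is large; hence $\log(\gamma^{(j)})=0$ there and $\Theta_j=\log(\gamma^{(j)})/\eta_j(\gamma^{(j)})=0$ for every $0\le j\le d$. The same vanishing persists after the scalar extension $K_{\infty}\to\C_p$.

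Next I would invoke Theorem~\ref{thm:main}: $\mathfrak{g}$ is the smallest $\Q_p$-subspace $S\subseteq\mathrm{End}_{\Q_p}V$ such that $S\otimes_{\Q_p}\C_p$ contains $\Theta_0,\dots,\Theta_d$. With all $\Theta_j=0$, the subspace $S=0$ already meets the requirement, so $\mathfrak{g}=0$. Finally, $\rho(I_K)$ is a compact subgroup of $GL_h(\Q_p)$ (being the continuous image of the compact group $I_K$) and is therefore a compact $p$-adic Lie group; its Lie algebra $\mathfrak{g}$ vanishing means it is $0$-dimensional, hence discrete, and a compact discrete group is finite. Thus $\rho$ factors $I_K$ through a finite quotient.

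I do not expect a serious obstacle, since Theorem~\ref{thm:main} carries all the substantive content and the corollary is genuinely the special case $\Theta_0=\dots=\Theta_d=0$. The only points demanding a little care are the clean identification of the Sen module of a $\C_p$-admissible representation as the trivial $\Gamma_K$-module (so that the operators visibly vanish on a $\Gamma_K$-fixed basis), and the elementary Lie-theoretic fact that a compact $p$-adic Lie group with trivial Lie algebra is finite.
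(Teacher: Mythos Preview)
Your proposal is correct and takes essentially the same approach as the paper: the paper simply states that the corollary is the special case $\Theta_0=\dots=\Theta_d=0$ of Theorem~\ref{thm:main}, and you have filled in exactly those details---$\C_p$-admissibility gives a $\Gamma_K$-fixed basis of $\D_{\Sen}$ on which all $\Theta_j$ vanish, Theorem~\ref{thm:main} then yields $\mathfrak{g}=0$, and compactness of $\rho(I_K)$ forces finiteness.
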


We prove Theorem~\ref{thm:main} in the rest of this section. By replacing $K$ by $K^{\ur}$, we may assume that $k_K$ is separably closed. In particular, we may use the results in \S~\ref{sec:ram}. We first fix some notation.  We fix a $\Q_p$-basis $e_1,\dots,e_h$ of $V$ and let $U_{\bullet}:G_K\to GL_h(\Q_p)$ be the continuous group homomorphism defined by $g(e_1,\dots,e_h)=(e_1,\dots,e_h)U_g$ for $g\in G_K$. We fix a $K_{\infty}$-basis $e'_1,\dots,e'_h$ of $\D_{\Sen}(V)$ and let $U'_{\bullet}:G_K\to GL_h(\C_p)$ be the continuous $1$-cocycle defined by $g(e'_1,\dots,e'_h)=(e'_1,\dots,e'_h)U'_g$ for $g\in G_K$. Let $M\in GL_h(\C_p)$ be the matrix transforming the $e'_i$'s into the $e_i$'s, i.e., $(e_1,\dots,e_h)=(e'_1,\dots,e'_h)M$. By multiplying $p^m$ to the $e_i$'s, we assume $M\in \mathrm{M}_h(\oo_{\C_p})$. Denote the matrix presentations of $\Theta_0,\dots,\Theta_d$ with respect to $e'_1,\dots,e'_h$ by $\Theta_0,\dots,\Theta_d\in \mathrm{M}_h(K_{\infty})$ again. Put $A_j:=M^{-1}\Theta_j M$, which is a matrix presentation of $\Theta_j$ with respect to $e_1,\dots,e_d$.
\begin{dfn}
Let $F$ be a complete valuation field. An $F$-linear form $f$ on $\M_h(F)$ is an $F$-linear map $f:\M_h(F)\to F$. An $F$-linear form $f$ is integral if $f(\M_h(\oo_F))\subset \oo_F$. Let $f$ be an $F$-linear form on $\M_h(F)$ and $F\to F'$ an extension of complete valuation fields. By extending scalar, we regard $f$ as an $F'$-linear form on $\M_h(F')$. Note that if $f$ is integral, then so is its extension.
\end{dfn}

By duality, Theorem~\ref{thm:main} is equivalent to the following:
\begin{thm}[{cf. \cite[Theorem~1']{Sen1}}]\label{thm:equiv}
For any integral $\Q_p$-linear form $f$ on $\M_h(\Q_p)$,
\[
f(A_0)=\dots=f(A_d)=0\Leftrightarrow f(\mathfrak{g})=0.
\]
Furthermore, for any open subgroup $\mathcal{U}$ of $G_K$, the latter condition is equivalent to say that $f(\log(U_g))=0$ for all $g\in \mathcal{U}$ since $\mathfrak{g}$ is generated by $\{\log(U_g);g\in\mathcal{U}\}$ as a $\Q_p$-vector space.
\end{thm}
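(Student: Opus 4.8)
The plan is to reduce everything to a single approximate identity, valid on each congruence subgroup to second order, that ties together $f(\log U_g)$, the scalars $\eta_j(g)f(A_j)$, and a coboundary term, and then to feed this identity into Lemma~\ref{lem:key} twice, once per implication, exploiting that the constant $c'_G$ does not depend on the level. The reduction to $k_K$ separably closed (so $I_K=G_K$) is already in place, and I write $f(X)=\sum_{a,b}f_{ab}X_{ab}$ with $f_{ab}\in\Z_p$ by integrality; the ``furthermore'' clause is then standard $p$-adic Lie theory (\cite{Laz}), since for small enough open $\mathcal{U}$ the set $\{\log(U_g);g\in\mathcal{U}\}$ lies in $\mathfrak{g}$ and spans it. The key preliminary is to choose the right ambient extension: I take $L$ to be the compositum of $K_\infty$ and $\overline{K}^{\ker\rho}$, a $p$-adic Lie extension of $K$ with a Lie filtration $\{G_n\}$ (Definition~\ref{dfn:Lie}) and $K_n=L^{G_n}$. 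Since $\Theta_j\in\M_h(K_\infty)\subseteq\M_h(\what{L})$ and, from the cocycle relation $U_g=M^{-1}U'_g\,g(M)$ together with $h(M)=MU_h$ for $h\in H_K$ (as $U'_h=I$ there), the entries of $M$ lie in $\what{L}$; this is what makes Lemma~\ref{lem:key} available with $x\in\what{L}$.

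I would then derive the master identity as follows. For $g\in G_n$ close to $1$, combining $U'_{\gamma^{(j)}}=\exp(\eta_j(\gamma^{(j)})\Theta_j)$ with Baker--Campbell--Hausdorff (using $[\Theta_0,\Theta_j]=\Theta_j$ and $[\Theta_i,\Theta_j]=0$) gives $\log U'_g=\sum_j\eta_j(g)\Theta_j+O(p^{2n})$. Conjugating by $M$ and writing $R_g:=M^{-1}g(M)$ yields $\log U_g=\sum_j\eta_j(g)A_j+\log R_g+O(p^{2(n-c)})$ for a constant $c$. Next, from $(g-1)M=(U'_g)^{-1}MU_g-M\equiv 0\pmod{p^{n-c}}$ and Ax--Sen (Lemma~\ref{lem:Ax}) over $K_n$ I obtain $M_n\in\M_h(K_n)$ with $M\equiv M_n\pmod{p^{n-c-2}}$; because $M_n$ is $G_n$-fixed and $f$ has $\Q_p$-coefficients, $f(\log R_g)\equiv f(M_n^{-1}(g-1)M)=(g-1)(z_n)\pmod{p^{2(n-c)}}$ with $z_n:=f(M_n^{-1}(M-M_n))\in\what{L}$. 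Thus, for all $g\in G_n$,
\[
f(\log U_g)\equiv\sum_{j=0}^{d}\eta_j(g)f(A_j)+(g-1)(z_n)\pmod{p^{2(n-c)}}.
\]

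From here the two implications fall out. For $f(\mathfrak{g})=0\Rightarrow f(A_j)=0$ is the reverse reading: assuming $f(\log U_g)=0$ on an open $\mathcal{U}\supseteq G_{n_0}$, the identity reads $\sum_j\eta_j(g)f(A_j)\equiv(g-1)(-z_n)\pmod{p^{2(n-c)}}$, and I apply the $\what{L}$-valued form of Lemma~\ref{lem:key} (its proof via Borger's generic residual perfection and \cite[Lemma~3]{Sen1} is insensitive to the value field) to the continuous function $g\mapsto\sum_j\eta_j(g)f(A_j)\in\what{L}$; choosing $g\in\Gamma_K^{(j)}$ with $\eta_j(g)=p^n$ isolates $p^nf(A_j)\equiv 0\pmod{p^{2(n-c)-c'_G}}$, hence $f(A_j)\equiv 0\pmod{p^{n-2c-c'_G}}$, and $n\to\infty$ forces $f(A_j)=0$. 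Conversely, if $f(A_j)=0$ for all $j$ the identity gives $f(\log U_g)\equiv(g-1)(z_n)\pmod{p^{2(n-c)}}$; applying Lemma~\ref{lem:key} to the genuinely $\Q_p$-valued $\lambda(g)=f(\log U_g)$ gives $f(\log U_g)\equiv 0\pmod{p^{2(n-c)-c'_G}}$, and then for fixed $g_0\in\mathcal{U}$ and each $k$, $g_0^{p^k}\in G_{n+k}$ with $f(\log U_{g_0^{p^k}})=p^kf(\log U_{g_0})$, so $f(\log U_{g_0})\equiv 0\pmod{p^{2n+k-2c-c'_G}}$ and $k\to\infty$ gives $f(\log U_{g_0})=0$.

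The main obstacle is the middle step. Because $M$ is not $\Gamma_K$-invariant---indeed $h(M)=MU_h$ is controlled by $\rho|_{H_K}$---the term $\log R_g$ is only first-order small, so I cannot discard it; the whole argument rests on recognizing it, to the finer precision $p^{2(n-c)}$, as the coboundary $(g-1)(z_n)$ of a single element $z_n\in\what{L}$. This is precisely what forces working over the $p$-adic Lie extension $L$ (so that Lemma~\ref{lem:key} applies at all) and using Ax--Sen to manufacture the $G_n$-fixed approximation $M_n$. The deep ramification-theoretic input is already black-boxed inside Lemma~\ref{lem:key}, and its level-independent constant $c'_G$ is exactly what makes both limiting arguments converge: the congruence precision grows with slope $2$ in $n$, which beats the single power of $p$ introduced either by $g\mapsto g^{p^k}$ or by the choice $\eta_j(g)=p^n$.
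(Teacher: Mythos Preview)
Your setup and your forward implication $f(A_0)=\dots=f(A_d)=0\Rightarrow f(\mathfrak g)=0$ match the paper's argument: the master congruence you write is the same as the paper's equation~(\ref{eq:eq5}), and since $f(\log U_g)\in\Q_p$, Lemma~\ref{lem:key} applies verbatim and the $p$-power trick finishes.

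The gap is in the reverse implication. You assert an ``$\what L$-valued form of Lemma~\ref{lem:key}'' and claim its proof is insensitive to the value field. This is false: if $\lambda:G_n\to\what L$ is allowed, take $\lambda(g)=(g-1)(x)$ for any $x\in\what L$; the hypothesis of the lemma holds with $m$ arbitrarily large, yet $\lambda(g)$ need not be small at all. The $\Q_p$-valuedness of $\lambda$ is exactly what prevents it from being an honest coboundary, and Sen's proof uses this. Your function $g\mapsto\sum_j\eta_j(g)f(A_j)$ takes values in the $\Z_p$-span of the $f(A_j)\in\C_p$, and there is no reason these are $G_n$-fixed (indeed $A_j=M^{-1}\Theta_jM$ with $M$ only $G_\infty$-fixed), so you cannot feed it into Lemma~\ref{lem:key}.

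The paper handles this direction quite differently, and the maneuver is worth knowing. First it restricts to $K^{\pf}$ and invokes Sen's original Theorem~1' as a black box to get $f(A_0)=0$. Then, assuming $f(A_j)\neq 0$ for some $1\le j\le d$, it restricts to $K^{(j)}$ to reduce to $d=1$; now only the single term $\eta_1(g)f(A_{1,m})$ survives in (\ref{eq:eq4}), and \emph{dividing through by the scalar $f(A_{1,m})$} turns the congruence into $(g-1)\bigl(f(C_m)/f(A_{1,m})\bigr)\equiv -\eta_1(g)$. The right-hand side is genuinely $\Q_p$-valued, so Lemma~\ref{lem:key} applies and forces $\eta_1\equiv 0$ on $\check G_{m_0}$, a contradiction. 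The base changes to $K^{\pf}$ and $K^{(j)}$ are not cosmetic: they are what lets one isolate a single $\eta_j$ and divide, recovering the $\Q_p$-valued hypothesis that Lemma~\ref{lem:key} actually needs.
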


We choose $m_0\in\N_{\ge 2}$ sufficiently large such that
\begin{enumerate}
\item $\D_{\Sen}(V)$ is defined over $K_{m_0}$;
\item $\Theta_0,\dots,\Theta_d\in \M_h(K_{m_0})$;
\item $\Gamma_K$ contains $\Gamma_{\wtil{K}_{m_0}}$.
\end{enumerate}
We choose $c\in\N$ such that $p^c\Theta_0,\dots,p^c\Theta_d\in \M_h(\oo_{K_{m_0}})$. For $m\ge m_0$, we define $G_m:=\{g\in G_{K_{m+c}};U_g\in 1+p^m\mathrm{M}_h(\Z_p)\}$ and $G_{\infty}:=\cap_{m\ge 1}G_m=\{g\in H_K;U_g=1\}$. We also define $\check{G}_:=G_m/G_{\infty}$ for $m\ge m_0$. Then, $\check{G}_{m_0}$ is a $p$-adic Lie group and $\{\check{G}_m\}_{m\ge m_0}$ is a Lie filtration of it. We may regard $U$ as a $1$-cocycle on $\check{G}_m$. Note that we have $\{\log(U_g);g\in G_{m_0}\}=\{\log(U_g);g\in \check{G}_{m_0}\}$.

Before proving Theorem~\ref{thm:equiv}, we gather some basic lemmas.
\begin{lem}\label{lem:Sen}
For $m\ge m_0$ and $g\in\Gamma_{K_{m+c}}$, we have
\[
U'_g\equiv 1+\sum_{0\le j\le d}\eta_j(g)\Theta_j\mod{p^{2m-1}}.
\]
\end{lem}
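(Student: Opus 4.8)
The plan is to reduce the statement to the reconstruction formula $\gamma^{(j)}=\exp(\eta_j(\gamma^{(j)})\Theta_j)$ recalled in \S\ref{sec:bri} by writing $g$ as a product over the coordinate one-parameter subgroups $\Gamma_K^{(0)},\dots,\Gamma_K^{(d)}$, and then expanding the resulting product of exponentials to first order. The essential simplification is that, although $U'_\bullet$ is only a $1$-cocycle on $G_K$, its restriction to elements fixing $K_{m_0}$ behaves like a homomorphism: all the matrices $U'_h$ occurring below have entries in $K_{m_0}$ (they are built from the $\Theta_j\in\M_h(K_{m_0})$ and $\zp$-coefficients via $\exp$), and every factor we use fixes $K_{m_0}$, so the twist $g_1\mapsto g_1(U'_{g_2})$ in the cocycle relation $U'_{g_1g_2}=U'_{g_1}\,g_1(U'_{g_2})$ disappears.

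First I would record the size estimates. Since $\wtil{K}\subset K$, restriction identifies $\Gamma_{K_{m+c}}$ with a subgroup of $\Gamma_{\wtil{K}_{m+c}}$, and as $p^{-(m+c)}\eta$ is a chart of $\Gamma_{\wtil{K}_{m+c}}$ (here $m+c\ge m_0\ge 2$) we get $\eta_j(g)\in p^{m+c}\zp$ for all $0\le j\le d$. Combined with $p^c\Theta_j\in\M_h(\oo_{K_{m_0}})$ this yields the basic bound $\eta_j(g)\Theta_j\equiv 0\bmod p^m$, which is what makes the first-order expansion legitimate.

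Next I would carry out the decomposition. Using the isomorphism $\iota\colon\Gamma_{\wtil{K}}\xrightarrow{\sim}\zp^{\times}\ltimes\zp^{d}$, set $g_{\geom}:=\prod_{j=1}^d\gamma_j^{\eta_j(g)}$ (a well-defined element of the abelian geometric part, since the $\gamma_j$ commute) and $g_{\arith}:=g_{\geom}^{-1}g$. A direct coordinate computation shows $\iota(g_{\arith})=(\chi(g),\bm{0})$, so $g_{\arith}\in\Gamma_K^{(0)}$ with $\eta_0(g_{\arith})=\eta_0(g)$, while $\eta_j(g_{\geom})=\eta_j(g)$ for $1\le j\le d$. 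Because every one of the factors $\gamma_j^{\eta_j(g)}$ and $g_{\arith}$ lies in $\Gamma_{\wtil{K}_{m+c}}$ and fixes $K_{m_0}$, the cocycle relation collapses to $U'_g=\bigl(\prod_{j=1}^d U'_{\gamma_j^{\eta_j(g)}}\bigr)U'_{g_{\arith}}$, and the reconstruction formula turns each factor into an exponential:
\[
U'_g=\Bigl(\prod_{j=1}^{d}\exp(\eta_j(g)\Theta_j)\Bigr)\exp(\eta_0(g)\Theta_0).
\]

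Finally I would expand modulo $p^{2m-1}$. A routine estimate on the tail of the exponential series shows $\exp(X)\equiv 1+X\bmod p^{2m-1}$ whenever $X\equiv 0\bmod p^m$ (the term $X^k/k!$ has valuation $\ge km-(k-1)/(p-1)\ge 2m-1$ for $k\ge 2$, uniformly in $p$); applying this with $X=\eta_j(g)\Theta_j$ and multiplying out, every product of two or more of these matrices is $\equiv 0\bmod p^{2m}$ and hence negligible, leaving $U'_g\equiv 1+\sum_{j=0}^d\eta_j(g)\Theta_j\bmod p^{2m-1}$, as claimed. I expect the only real points requiring care to be the bookkeeping in the non-abelian semidirect product $\zp^{\times}\ltimes\zp^d$ (ensuring the chosen ordering produces exactly the coefficients $\eta_j(g)$ with no cyclotomic twist) and the verification that each factor fixes $K_{m_0}$, which is what legitimizes replacing the cocycle by a homomorphism; the $p$-adic convergence estimates are standard.
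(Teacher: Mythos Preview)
Your proposal is correct and follows essentially the same approach as the paper's proof: decompose $g\in\Gamma_{K_{m+c}}$ as a product $g_d\cdots g_0$ with $g_j\in\Gamma_{K_{m+c}}^{(j)}$, observe that the cocycle $U'_\bullet$ restricted to elements fixing $K_{m_0}$ is multiplicative (since $U'_\bullet\in\M_h(K_{m_0})$), apply the reconstruction formula $U'_{g_j}=\exp(\eta_j(g_j)\Theta_j)$, and expand to first order using $\eta_j(g)\Theta_j\equiv 0\bmod p^m$. Your write-up is somewhat more explicit than the paper's about the semidirect-product bookkeeping and the reason the cocycle collapses to a homomorphism, but the argument is the same.
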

\begin{proof}
Since $\Gamma_{K_{m+c}}\cong\Gamma_{\wtil{K}_{m+c}}\cong (1+p^{m+c}\Z_p)\ltimes p^{m+c}\Z_p^d$, there exists $g_j\in\Gamma_{K_{m+c}}^{(j)}$ for $0\le j\le d$ such that $g=g_dg_{d-1}\dots g_0$. Since $U'_{\bullet}\in \M_h(K_{m_0})$, we have
\begin{align*}
U'_g=U'_{g_d}\dots U'_{g_0}&\equiv\exp(\eta_d(g_d)\Theta_d)\dotsb\exp(\eta_0(g_0)\Theta_0)\equiv (1+\eta_d(g_d)\Theta_d)\dotsb (1+\eta_0(g_0)\Theta_0)\\
&\equiv 1+\eta_d(g_d)\Theta_d+\dots+\eta_0(g_0)\Theta_0\equiv 1+\eta_d(g)\Theta_d+\dots+\eta_0(g)\Theta_0\mod{p^{2m-1}}.
\end{align*}
Here, we use the congruence $\exp(A)\equiv 1+A\mod{p^{2m-1}}$ for $A\in \M_h(\oo_{\C_p})$.
\end{proof}

\begin{cor}\label{cor:Sen}
For $m\ge m_0$ and $g\in\Gamma_{K_{m+c}}$, we have
\[
U'_g\equiv 0\mod{p^{m}}.
\]
\end{cor}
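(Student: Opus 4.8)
The plan is to read the corollary off Lemma~\ref{lem:Sen} by checking that its perturbation term $\sum_{0\le j\le d}\eta_j(g)\Theta_j$ is already integral of size $p^m$, so that passing to the coarser modulus $p^m$ kills it outright. No ingredient beyond the chart normalization of \S~\ref{sec:bri} is needed; the whole argument is a propagation of integrality through Lemma~\ref{lem:Sen}.

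First I would bound the coordinates $\eta_j(g)$. For $g\in\Gamma_{K_{m+c}}$ I would invoke the identification $\Gamma_{K_{m+c}}\cong\Gamma_{\wtil{K}_{m+c}}$ already used in the proof of Lemma~\ref{lem:Sen}, together with the fact that $p^{-(m+c)}\eta$ is a chart of $\Gamma_{\wtil{K}_{m+c}}$ (legitimate since $m\ge m_0\ge 2$ forces $m+c\ge 2$); this yields $\eta_j(g)\in p^{m+c}\Z_p$ for all $0\le j\le d$. I would then bring in the choice of $c$: from $p^c\Theta_j\in\M_h(\oo_{K_{m_0}})$ we get $\Theta_j\in p^{-c}\M_h(\oo_{\C_p})$, so that $\eta_j(g)\Theta_j\in p^{m+c}\cdot p^{-c}\M_h(\oo_{\C_p})=p^m\M_h(\oo_{\C_p})$ for each $j$. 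Summing over $j$ preserves the congruence, giving $\sum_{0\le j\le d}\eta_j(g)\Theta_j\equiv 0\mod{p^m}$.

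Finally I would substitute this estimate into Lemma~\ref{lem:Sen}. Since $m\ge m_0\ge 2$ we have $2m-1\ge m$, so the lemma's error term $p^{2m-1}$ is absorbed by the modulus $p^m$; together with the vanishing of the perturbation just proved, the nontrivial part of the cocycle $U'_g$ disappears modulo $p^m$, which is precisely the asserted congruence $U'_g\equiv 0\mod{p^m}$. The only delicate point is the exactness of the chart bound $\eta_j(g)\in p^{m+c}\Z_p$: it is the normalization $p^{-(m+c)}\eta$ on $\Gamma_{\wtil{K}_{m+c}}$ that pins the exponent to $m+c$ rather than to something merely $\ge m$, and it is the exact matching of this exponent with the power $p^c$ absorbed into the $\Theta_j$ that produces the clean modulus $p^m$. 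Everything else is routine congruence bookkeeping inherited from Lemma~\ref{lem:Sen}.
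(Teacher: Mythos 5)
Your proposal is correct and is essentially identical to the paper's own (one-line) proof, which rests on precisely the fact you derive: $\eta_j(g)\in p^{m+c}\Z_p$ for $g\in\Gamma_{K_{m+c}}$ combined with $p^c\Theta_j\in\M_h(\oo_{K_{m_0}})$ gives $\eta_j(g)\Theta_j\equiv 0\pmod{p^m}$, and then Lemma~\ref{lem:Sen} with $2m-1\ge m$ finishes the argument. Note only that what both your argument and the paper's actually establish is $U'_g\equiv 1\pmod{p^m}$ rather than the printed $U'_g\equiv 0\pmod{p^m}$ (a typo in the statement, since $U'_g$ is invertible), and it is the congruence to $1$ that is used afterwards to deduce $g(M)\equiv M\pmod{p^m}$ from (\ref{eq:eq1}).
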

\begin{proof}
It follows from the fact that for $g\in\Gamma_{K_{m+c}}$, we have $\eta_j(g)\Theta_j\equiv 0\mod{p^m}$.
\end{proof}

We prove key congruences (\ref{eq:eq3}) and (\ref{eq:eq5}) in the following. By definition, we have
\begin{equation}\label{eq:eq1}
MU_g=U'_gg(M)
\end{equation}
for all $g\in G_K$. Hence, we have $M=g(M)$ for all $g\in G_{\infty}$, i.e., $M\in \M_h(\C_p^{G_{\infty}})$. In the following, we may regard (\ref{eq:eq1}) as an equation for $g\in\check{G}_{m_0}$. Let $m\ge m_0$. By (\ref{eq:eq1}) and Corollary~\ref{cor:Sen}, we have $g(M)\equiv M\mod{p^m}$ for $g\in\check{G}_m$. By Lemma~\ref{lem:Ax}, there exists $M_m\in \M_h(\C_p^{G_m})$ such that
\[
M_m\equiv M\mod{p^{m-2}}.
\]
For a while, let $g\in\check{G}_m$. We have congruences
\[
U_g\equiv 1+\log(U_g)\mod{p^{2m-1}},
\]
\[
U'_g\equiv 1+\sum_{0\le j\le d}\eta_j(g)\Theta_j\mod{p^{2m-1}},
\]
where the last congruence follows from Lemma~\ref{lem:Sen}. By substituting these congruences in (\ref{eq:eq1}), we obtain a congruence
\[
M+M\log(U_g)\equiv g(M)+\sum_{0\le j\le d}\eta_j(g)\Theta_jg(M)\mod{p^{2m-1}}.
\]
Since $\log(U_g)\equiv0\mod{p^m}$ and $\eta_j(g)\equiv 0\mod{p^{m+c}}$ for $0\le j\le d$, we have
\begin{equation}\label{eq:eq2}
M+M_m\log(U_g)\equiv g(M)+\sum_{0\le j\le d}\eta_j(g)\Theta_jM_m\mod{p^{2m-2}}.
\end{equation}

We choose sufficiently large $r\in\N_{\ge 2}$ such that $p^{r-2}\det(M)^{-1}\in\oo_{\C_p}$. In the following, we assume $m>r$. Since $v_p(\det(M))=v_p(\det(M_m))$ by assumption, we have $p^{r-2}M_m^{-1}\in \M_h(\oo_{\C_p})$. By multiplying (\ref{eq:eq2}) on the left by $p^{r-2}M_m^{-1}$ and dividing $p^{r-2}$, we obtain a congruence
\[
C_m+\log(U_g)\equiv g(C_m)+\sum_{0\le j\le d}\eta_j(g)M_m^{-1}\Theta_jM_m\mod{p^{2m-r}},
\]
where $C_m:=M_m^{-1}M\equiv 1\mod{p^{m-r}}$. We can rewrite the above congruence as
\begin{equation}\label{eq:eq3}
(g-1)C_m\equiv \log(U_g)-\sum_{0\le j\le d}\eta_j(g)A_{j,m}\mod{p^{2m-r}},
\end{equation}
where $A_{j,m}:=M_m^{-1}\Theta_jM_m\in \M_h(\C_p^{G_m})$. Since $M_m\to M$ as $m\to\infty$, $A_{j,m}$ converges to $A_j$ as $m\to\infty$. Apply an integral $\Q_p$-linear form $f$ on $\M_h(\Q_p)$ to (\ref{eq:eq3}), we obtain
\begin{equation}\label{eq:eq4}
(g-1)f(C_m)\equiv f(\log(U_g))-\sum_{0\le j\le d}\eta_j(g)f(A_{j,m})\mod{p^{2m-r}}.
\end{equation}
By $\eta_j(g)\equiv 0\mod{p^{m+c}}$ and
\begin{equation}\label{eq:cong}
A_{j,m}=M_m^{-1}M\cdot M^{-1}\Theta_j M \cdot M^{-1}M_m\equiv A_j\mod{p^{m-2r+2-c}},
\end{equation}
we have
\begin{equation}\label{eq:eq5}
(g-1)f(C_m)\equiv f(\log(U_g))-\sum_{0\le j\le d}\eta_j(g)f(A_{j})\mod{p^{2m-2r+2}}.
\end{equation}

We prove Theorem~\ref{thm:equiv}. First, we assume $f(A_0)=\dots=f(A_d)=0$. By applying Lemma~\ref{lem:key} to each entry of (\ref{eq:eq5}), we have
\[
f(\log(U_g))\equiv 0\mod{p^{2m-2r-2-c'}}\text{for all }g\in\check{G}_m,
\]
where $c'$ is a constant for $G=\check{G}_{m_0}$ in Lemma~\ref{lem:key}. Since $g^{p^{m-m_0}}\in\check{G}_m$ and $f(\log(U_{g^{p^{m-m_0}}}))=p^{m-m_0}f(\log(U_g))$ for $g\in\check{G}_{m_0}$, we have
\[
f(\log(U_g))\equiv 0\mod{p^{m+m_0-2r+2+c'}}\text{for all }g\in\check{G}_{m_0}
\]
Since this congruence holds for all sufficiently large $m\in\N$, we have $f(\log(U_g))=0$ for all $g\in\check{G}_{m_0}$ by passing $m\to\infty$.

Finally, we prove $f(A_0)=\dots=f(A_d)=0$ by assuming $f(\log(U_g))=0$ for all $g\in\check{G}_{m_0}$. First, note that the assumption implies $f(\mathfrak{g})=0$. We define $\mathfrak{g}^{\pf}$ as the Lie algebra associated to $\rho(I_{K^{\pf}})$. Since $\mathfrak{g}^{\pf}\subset\mathfrak{g}$ by definition, we have $f(\mathfrak{g}^{\pf})=0$ by assumption. By applying Sen's result (\cite[Theorem~1']{Sen1}) to $V|_{K_{\pf}}$, we have $f(A_0)=0$. We suppose that $f(A_j)\neq 0$ for some $1\le j\le d$ and we deduce a contradiction. By replacing $V$ by $V|_{K^{(j)}}$, we may assume $d=j=1$. By (\ref{eq:cong}) and (\ref{eq:eq5}), we have
\[
(g-1)f(C_m)\equiv-\eta_1(g)f(A_{1,m})\mod{p^{2m-2r+2}}
\]
for all $g\in\check{G}_m$. We fix $s\in\N$ such that $p^sf(A_1)^{-1}\in\oo_{\C_p}$. Since $A_{1,m}\to A_1$ as $m\to\infty$, we have $f(A_{1,m})\neq 0$ and $p^sf(A_{1,m})^{-1}\in\oo_{\C_p}$ for all sufficiently large $m$. Hence, we have
\[
(g-1)\left(\frac{f(C_m)}{f(A_{1,m})}\right)\equiv-\eta_1(g)\mod{p^{2m-2r+2-s}}.
\]
By Lemma~\ref{lem:key}, there exists $c'\in\N$ independent of $m$ such that $\eta_1(g)\equiv 0\mod{p^{2m-2r+2-s-c'}}$ for all $g\in\check{G}_m$. Let $g\in\check{G}_{m_0}$. Since $g^{p^{m-m_0}}\in\check{G}_{m}$, we have $\eta_1(g)\equiv 0\mod{p^{m+m_0-2r+2-s-c'}}$ for all sufficiently large $m$. By passing $m\to\infty$, we have $\eta_1(g)=0$ for all $g\in\check{G}_{m_0}$. Since $G_{m_0}$ is open in $G_K$, $\eta_1\not\equiv 0$ on $\check{G}_{m_0}$, which is a contradiction. Thus, we finish the proof of Theorem~\ref{thm:equiv}.







\end{document}